\documentclass[11pt,reqno]{amsart}
\usepackage{graphicx}
\usepackage{verbatim}
\usepackage{textcomp}
\usepackage{amssymb}
\usepackage{cite}
\usepackage{amsmath}
\usepackage{latexsym}
\usepackage{amscd}
\usepackage{amsthm}
\usepackage{mathrsfs}
\usepackage{xypic}
\usepackage{bm}
\usepackage{url}
\usepackage{hyperref}

\vfuzz2pt 
\hfuzz2pt 
\newtheorem{thm}{Theorem}[section]
\newtheorem{corr}[thm]{Corollary}

\theoremstyle{definition}

\theoremstyle{remark}
\newtheorem{rem}{Remark}[section]
\numberwithin{equation}{section}
\setcounter{section}{0}

\begin{document}
\title[Estimates for eigenvalues of the operator $L_r$]
{Estimates for eigenvalues of the operator $L_r$ }
\author{Guangyue Huang}
\address{College of Mathematics and Information Science, Henan Normal
University, Xinxiang, Henan 453007, People's Republic of China}
\email{hgy@henannu.edu.cn }
\author{Xuerong Qi}
\address{School of Mathematics and Statistics, Zhengzhou University, Zhengzhou, Henan 450001,
People's Republic of China} \email{xrqi@zzu.edu.cn}
\subjclass[2000]{Primary 53C40, Secondary 58C40.}
\keywords{$r$-minimal submanifold, $L_r$ operator, eigenvalues.}

\maketitle

\begin{abstract}
In this paper, we consider an eigenvalue problem of the elliptic operator
$$
L_r={\rm div}(T^r\nabla\cdot )$$ on compact submanifolds in
arbitrary codimension of space forms $\mathbb{R}^N(c)$ with
$c\geq0$. Our estimates on eigenvalues are sharp.
\end{abstract}

\section{Introduction}

Let $x: M\rightarrow \mathbb{R}^N(c)$ be an $n$-dimensional orientable closed connected submanifold of an $N$-dimensional space form $\mathbb{R}^N(c)$ of
constant sectional curvature c, where  $\mathbb{R}^N(c)$ is
Euclidean space $\mathbb{R}^N$ when $c=0$, $\mathbb{R}^N(c)$ is a
unit sphere $\mathbb{S}^N$ when $c=1$, and $\mathbb{R}^N(c)$ is a
hyperbolic space $\mathbb{H}^N$ when $c=-1$. Let $\{e_A\}_{A=1}^N$
be an orthonormal basis along $M$ such that $\{e_i\}_{i=1}^n$ are
tangent to $M$ and $\{e_\alpha\}_{\alpha=n+1}^N$ are normal to $M$.
Denote by $\{\theta_i\}_{i=1}^n$ and
$\{\theta_\alpha\}_{\alpha=n+1}^N$ the dual frame,
respectively.
Then we have the following structure
equation (see \cite{Cao2007}):
\begin{equation}\label{1Section1}
dx=\sum_i\theta_ie_i,
\end{equation}
\begin{equation}\label{1Section2}
de_i=\sum_j\theta_{ij}e_j+\sum_{\alpha,j}h_{ij}^\alpha
\theta_je_\alpha-c\theta_ix,
\end{equation}
\begin{equation}\label{1Section3}
de_\alpha=-\sum_{i,j}h_{ij}^{\alpha}
\theta_je_i+\sum_{\beta}\theta_{\alpha\beta}e_\beta,
\end{equation}
where $h_{ij}^{\alpha}$ denote the components of the second fundamental form of $x$. Let
$B_{ij}=\sum_{\alpha=n+1}^Nh_{ij}^\alpha e_\alpha$. If
$r\in\{0,1,\cdots,n-1\}$ is even, the operator $L_r$ is defined by
\begin{equation}\label{1Section4}
L_r(f)=\sum_{i,j}T^r_{ij}f_{ij},
\end{equation} and
\begin{equation}\label{1Section5}
L_{r-1}(f)=\sum_{i,j}T^{r-1}_{\alpha ij}
f_{ij}e_\alpha.
\end{equation}
Here $T^r$ is given by
\begin{equation}\label{1Section6}
T^r_{ij}=\frac{1}{r!}\sum_{\mbox{\tiny$\begin{array}{c}
i_1\cdots i_{r} \\
j_1\cdots j_{r}\end{array}$}}\delta_{i_1\cdots i_r i}^{j_1\cdots
j_r j}\langle B_{i_1j_1},B_{i_2j_2}\rangle\cdots\langle
B_{i_{r-1}j_{r-1}},B_{i_rj_r}\rangle;
\end{equation}
\begin{equation}\label{1Section7}
T^{r-1}_{\alpha
ij}=\frac{1}{(r-1)!}\sum_{\mbox{\tiny$\begin{array}{c}
i_1\cdots i_{r-1}\\
j_1\cdots j_{r-1}\end{array}$}}\delta_{i_1\cdots i_{r-1}
i}^{j_1\cdots j_{r-1} j}\langle
B_{i_1j_1},B_{i_2j_2}\rangle\cdots\langle
B_{i_{r-3}j_{r-3}},B_{i_{r-2}j_{r-2}}\rangle
h_{i_{r-1}j_{r-1}}^{\alpha},
\end{equation}
 $\delta_{i_1\cdots i_r i}^{j_1\cdots j_r j}$ is the
generalized Kronecker symbols. It has been shown in \cite{Cao2007} that $T^{r}$ is symmetric and divergence-free. When $r$ is even,
$$ L_r(f)={\rm div}(T_r\nabla f ),$$
and the corresponding
$r$th mean curvature function $S_r$ and $(r+1)$th mean curvature
vector field $\mathbf{S}_{r+1}$ are given by
\begin{equation}\label{1Section8}\aligned
S_r=&\frac{1}{r!}\sum_{\mbox{\tiny$\begin{array}{c}
i_1\cdots i_{r}\\
j_1\cdots j_{r}\end{array}$}}\delta_{i_1\cdots i_r}^{j_1\cdots
j_r}\langle B_{i_1j_1},B_{i_2j_2}\rangle\cdots\langle
B_{i_{r-1}j_{r-1}},B_{i_rj_r}\rangle\\
=&\frac{1}{r}T^{r-1}_{\alpha ij}h_{ij}^\alpha\\
=&\binom{n}{r}H_r;
\endaligned\end{equation}
\begin{equation}\label{1Section9}\aligned
\mathbf{S}_{r+1}=&\frac{1}{(r+1)!}\sum_{\mbox{\tiny$\begin{array}{c}
i_1\cdots i_{r+1}\\
j_1\cdots j_{r+1}\end{array}$}}\delta_{i_1\cdots i_{r+1}}^{j_1\cdots
j_{r+1}}\langle B_{i_1j_1},B_{i_2j_2}\rangle\cdots\langle
B_{i_{r-1}j_{r-1}},B_{i_rj_r}\rangle B_{i_{r+1}j_{r+1}}\\
=&\frac{1}{r+1}T^{r}_{ij}h_{ij}^\alpha e_\alpha\\
=&\binom{n}{r+1}\mathbf{H}_{r+1}.
\endaligned\end{equation}
It has also been shown in \cite{Cao2007} that for any even integer $r\in\{0,1,\cdots,n-1\}$, we have
\begin{equation}\label{1Section10}
{\rm trace}(T^r)=(n-r)S_r
\end{equation} and
\begin{equation}\label{1Section11}
L_r(x)=(r+1)\mathbf{S}_{r+1}-c(n-r)S_r\,x,
\end{equation}
\begin{equation}\label{1Section12}
L_r(e_\alpha)=-\sum_{i,j,k}T^r_{ij}h_{ik,j}^\alpha
e_k-\sum_{i,j,k,\beta}T^r_{ij}h_{ik}^\alpha h_{jk}^\beta
e_\beta+c\sum_{i,j}T^r_{ij}h_{ij}^\alpha x.
\end{equation} When $M$ is
a hypersurface of a space form, we have
\begin{equation}\label{1Section13}
L_0(f)=\Delta(f),\ \ \ \
L_1(f)=\Box(f)=(nH\delta_{ij}-h_{ij})f_{ij},
\end{equation} where the operator $\Box$ was introduced by
Cheng-Yau in \cite{Cheng77} and studied by many mathematicians.
In \cite{Alencar1993}, Alencar, do Carmo and Rosenberg
generalized Reilly's inequality
to more general operators $L_r$ than the Laplacian.
That is,
they proved that when $M$ is an orientable closed hypersurface of $\mathbb{R}^{n+1}$
with $H_{r+1}>0$,
\begin{equation}\label{1Section14}
\lambda_1^{L_r}\int\limits_M H_r\,dv\leq c(r)\int\limits_M
H_{r+1}^2\,dv
\end{equation} and equality holds precisely if $M$ is a sphere.
Here $c(r)=(n-r)\binom{n}{r}$. In \cite{Grosjean2000}, Grosjean
obtained the following similar optimal upper bound for
$\lambda_1^{L_r}$ of closed hypersurfaces of any space form with
$H_{r+1}>0$ and convex isometric immersion $x$:
\begin{equation}\label{1Section15}
\lambda_1^{L_r}\,{\rm vol}(M)\leq c(r)\int\limits_M
\frac{H_{r+1}^2+cH_{r}^2}{H_r}\,dv
\end{equation} and equality holds if and only if $x(M)$ is an umbilical
sphere. For eigenvalues of $L_r$ and
some important elliptic operators, see also
\cite{Alencar2001,Alias2004,Alencar2013,
LiWang2012,Cheng2008,Cheng2014} and
references therein.

In this paper, we assume that $L_r$ is
elliptic on $M$, for some even integer $r\in\{0,1,\cdots,n-1\}$. The purpose of this
paper is to study the following closed eigenvalue problem of the elliptic operator $L_r$:
\begin{equation}\label{1Int1}
L_r(u)=-\lambda u
\end{equation} on compact submanifolds in arbitrary codimension of space forms.
 We know that the set of eigenvalues consists of
a sequence
$$0=\lambda_{0}^{L_r}<\lambda_{1}^{L_r}\leq\lambda_{2}^{L_r}\leq\cdots\leq\lambda_{k}^{L_r}
\cdots\rightarrow+\infty.$$
Denote by $u_i$ the
normalized eigenfunction corresponding to $\lambda_{i}^{L_r}$ such
that $\{u_i\}_0^\infty$ becomes an orthonormal basis of $L^2(M)$,
that is
\begin{equation*}
\left\{\begin{array}{l} L_r(u_i)=-\lambda_{i}^{L_r} u_i, \\
\int_{M}u_iu_j\,dv=\delta_{ij},\ \ {\rm for\ any}\ i, j=0,1,\cdots.
\end{array}\right.
\end{equation*}
We will prove the following results:

\begin{thm}\label{thmInt1}
Let $(M, g)$ be an $n$-dimensional orientable closed connected submanifold of a space form $\mathbb{R}^N(c)$ with $c\geq0$. Assume that $L_r$ is
elliptic on $M$, for some even integer $r\in\{0,1,\cdots,n-1\}$.
Then we have
\begin{equation}\label{1th1}
\lambda_{1}^{L_r}\int\limits_M H_r\,dv\leq c(r)\int\limits_M
(|\mathbf{H}_{r+1}|^2\,dv+c\,H_r^2)\,dv;
\end{equation}

\begin{equation}\label{1th2}
\sum\limits_{i=1}^n\sqrt{\lambda_{i}^{L_r}}\leq \frac{n}{{\rm
vol}(M)}\sqrt{(n-r)\int\limits_M S_r\,dv\int\limits_M
(|\mathbf{H}|^2+c)\,dv},
\end{equation}
where $c(r)=(n-r)\binom{n}{r}$.

In particular, for $c=0$, the equality in \eqref{1th1} holds if and
only if $M$ is a sphere in $\mathbb{R}^{n+1}$; for $c=1$,
the equality in \eqref{1th1} holds if and only if $x$ is
$r$-minimal. For $c=0$, the equality in \eqref{1th2} holds if and
only if $M$ is a sphere in $\mathbb{R}^{n+1}$; for $c=1$,
the equality in \eqref{1th2} holds if and only if $x$ is minimal.
\end{thm}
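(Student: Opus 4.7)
The plan is to test the variational characterization of the eigenvalues of $L_r$ with the ambient coordinate functions of the immersion and to exploit the formulas \eqref{1Section11}, $L_r(x)=(r+1)\mathbf{S}_{r+1}-c(n-r)S_r\,x$, together with the classical Beltrami identity $\Delta x=-n\mathbf{H}-nc\,x$. The key preliminary observation, which feeds into both \eqref{1th1} and \eqref{1th2}, is the pointwise identity
\[
|L_r(x)|^2 \;=\; (r+1)^2|\mathbf{S}_{r+1}|^2 + c^2(n-r)^2 S_r^2|x|^2 \;=\; c(r)^2\bigl(|\mathbf{H}_{r+1}|^2+cH_r^2\bigr),
\]
which follows because $\mathbf{S}_{r+1}$ is normal to $M$ inside $\mathbb{R}^N(c)$ (hence tangential to $\mathbb{S}^N$ and orthogonal to the position vector $x$ when $c=1$), $|x|^2=1$ on $\mathbb{S}^N$, and the factor $c^2$ vanishes when $c=0$; the algebraic simplification uses $S_r=\binom{n}{r}H_r$, $|\mathbf{S}_{r+1}|=\binom{n}{r+1}|\mathbf{H}_{r+1}|$, and $(r+1)\binom{n}{r+1}=(n-r)\binom{n}{r}$. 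The analogous Laplacian identity $|\Delta x|^2=n^2(|\mathbf{H}|^2+c)$ for $c\in\{0,1\}$ plays the corresponding role in \eqref{1th2}.

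For \eqref{1th1}, I expand each centered coordinate $x^A-\overline{x^A}=\sum_{k\ge 1}a^A_k u_k$ in an $L^2$-orthonormal basis of eigenfunctions of $-L_r$, where $\overline{x^A}=\mathrm{vol}(M)^{-1}\int_M x^A\,dv$ and $-L_r u_k=\lambda_k^{L_r}u_k$. The relation $\sum_A\nabla x^A\otimes\nabla x^A=\mathrm{id}_{TM}$ and integration by parts give $\sum_A\int_M x^A L_r(x^A)\,dv=-(n-r)\int_M S_r\,dv$, so setting $b_k:=\sum_A(a^A_k)^2$ one has $\sum_k\lambda_k^{L_r}b_k=(n-r)\int_M S_r\,dv$, while Parseval applied to the $\mathbb{R}^N$-valued function $L_r(x)$ gives $\sum_k(\lambda_k^{L_r})^2 b_k=\int_M|L_r(x)|^2\,dv$. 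Since $\lambda_k^{L_r}\ge\lambda_1^{L_r}$ for $k\ge 1$,
\[
\lambda_1^{L_r}\,(n-r)\int_M S_r\,dv \;\le\; \int_M|L_r(x)|^2\,dv \;=\; c(r)^2\int_M\bigl(|\mathbf{H}_{r+1}|^2+cH_r^2\bigr)\,dv,
\]
and dividing by $c(r)=(n-r)\binom{n}{r}$ yields \eqref{1th1}.

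For \eqref{1th2} the principal obstacle is that a sum of square roots admits no direct variational formula of the type enjoyed by a sum of eigenvalues. The plan is to use the Ky Fan min-max applied to $(-L_r)^{1/2}$ together with the concavity of $\sqrt{\,\cdot\,}$, which gives
\[
\sum_{i=1}^{n}\sqrt{\lambda_i^{L_r}} \;\le\; \sum_{j=1}^{n}\sqrt{\int_M T^r\nabla\phi_j\cdot\nabla\phi_j\,dv}
\]
for any $L^2$-orthonormal family $\{\phi_j\}_{j=1}^n$ orthogonal to the constants. I will then take $\phi_j=(x^{A_j}-\overline{x^{A_j}})/\|x^{A_j}-\overline{x^{A_j}}\|_{L^2}$ after an ambient orthogonal transformation making the coordinate differences orthogonal in $L^2$, apply Cauchy-Schwarz in the $j$-sum, and insert the Reilly-type bound
\[
\mathrm{vol}(M)^2 \;\le\; \int_M|x-\overline{x}|^2\,dv\cdot\int_M\bigl(|\mathbf{H}|^2+c\bigr)\,dv,
\]
itself a Cauchy-Schwarz consequence of the identity $|\Delta x|^2=n^2(|\mathbf{H}|^2+c)$ applied to the eigenfunction expansion of $\Delta x$. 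Combining these ingredients produces the desired bound $(n/\mathrm{vol}(M))\sqrt{(n-r)\int S_r\cdot\int(|\mathbf{H}|^2+c)}$; the delicate step is to keep track of the coefficients so that $n$, $\mathrm{vol}(M)$ and the two geometric integrals assemble correctly.

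The equality discussion is obtained by tracing back the Cauchy-Schwarz steps. Equality in \eqref{1th1} forces each $x^A-\overline{x^A}$ to lie in the $\lambda_1^{L_r}$-eigenspace; substituting back into \eqref{1Section11}, for $c=0$ one gets $|x-\overline{x}|^2\equiv\mathrm{const}$, so $M$ is a round sphere in $\mathbb{R}^{n+1}$, while for $c=1$ the orthogonality $\mathbf{S}_{r+1}\perp x$ together with the eigen-equation forces $\mathbf{S}_{r+1}=0$, i.e.\ $r$-minimality. Equality in \eqref{1th2} requires simultaneous saturation of the Ky Fan/concavity step and of the Reilly-type Cauchy-Schwarz, and unpacking these gives a round sphere for $c=0$ and a minimal immersion for $c=1$, as claimed.
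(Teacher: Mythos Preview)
Your argument for \eqref{1th1} is correct and is the same Reilly-type argument the paper uses, just packaged via the eigenfunction expansion of $x-\overline{x}$ instead of via a Gram--Schmidt/QR rotation of the ambient coordinates; the two are interchangeable.

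For \eqref{1th2}, however, there is a genuine gap. The Ky Fan/Jensen step
\[
\sum_{i=1}^{n}\sqrt{\lambda_i^{L_r}}\;\le\;\sum_{j=1}^{n}\sqrt{\int_M\langle T^r\nabla\phi_j,\nabla\phi_j\rangle\,dv}
\]
is fine, but after normalizing $\phi_j=(x^{A_j}-\overline{x^{A_j}})/\|x^{A_j}-\overline{x^{A_j}}\|$ you are left with a sum of $n$ ratios $\bigl(\int\langle T^r\nabla y^{A_j},\nabla y^{A_j}\rangle\bigr)/\|y^{A_j}\|^2$, and neither Cauchy--Schwarz in the $j$-sum nor the Reilly bound $\mathrm{vol}(M)^2\le\int|x-\overline{x}|^2\int(|\mathbf{H}|^2+c)$ converts this into the right-hand side of \eqref{1th2}. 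The difficulty is structural: the geometric identities $\sum_A\langle T^r\nabla x^A,\nabla x^A\rangle=(n-r)S_r$ and $\sum_A|\nabla x^A|^2=n$ are sums over all $N$ (or $N+1$) ambient coordinates, while your test-function scheme commits you to a specific choice of $n$ of them, with uncontrolled individual normalizations. Your sentence ``the delicate step is to keep track of the coefficients'' is precisely where the argument stops being a proof.

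The paper handles \eqref{1th2} by a different mechanism that avoids this obstruction. Its Lemma~2.1 produces, for each rotated coordinate $h_A$, the mixed inequality
\[
\sqrt{\lambda_A^{L_r}}\int_M|\nabla h_A|^2\,dv\;\le\;\delta\int_M\langle T^r\nabla h_A,\nabla h_A\rangle\,dv+\frac{1}{4\delta}\int_M(\Delta h_A)^2\,dv,
\]
which couples $L_r$ and $\Delta$ in a single estimate. Summing over all $A$, the right-hand side becomes $\delta(n-r)\int S_r+\tfrac{n^2}{4\delta}\int(|\mathbf{H}|^2+c)$. The reduction from $N$ coordinates to the first $n$ eigenvalues is then done \emph{pointwise} on the left via the rearrangement inequality
\[
\sum_{A=1}^{N}\sqrt{\lambda_A^{L_r}}\,|\nabla h_A|^2\;\ge\;\sum_{i=1}^{n}\sqrt{\lambda_i^{L_r}},
\]
which uses only $|\nabla h_A|^2\le 1$ and $\sum_A|\nabla h_A|^2=n$ (both preserved under the orthogonal rotation). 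Integrating and optimizing in $\delta$ gives \eqref{1th2}. This pointwise combinatorial step is the idea your outline is missing; without it (or a substitute), the Ky Fan route does not close.
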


Using the fact
$$\lambda_{1}^{L_r}\leq\lambda_{2}^{L_r}\leq\cdots\leq\lambda_{n}^{L_r},$$
we have
$$\sum\limits_{i=1}^n\sqrt{\lambda_{i}^{L_r}}
\geq n\sqrt{\lambda_{1}^{L_r}}.$$ Therefore, we obtain the following
upper bound of the first eigenvalue $\lambda_{1}^{L_r}$ from
\eqref{1th2}:

\begin{corr}\label{corr1} Under the assumption of Theorem \ref{thmInt1},
we have
\begin{equation}\label{1corr1}
\lambda_{1}^{L_r}\leq \frac{n-r}{({\rm vol}(M))^2}\int\limits_M
S_r\,dv\int\limits_M (|\mathbf{H}|^2+c)\,dv.
\end{equation} In particular, for $c=0$, the equality in \eqref{1corr1} holds if and
only if $M$ is a sphere in $\mathbb{R}^{n+1}$; for $c=1$,
the equality in \eqref{1corr1} holds if and only if $x$ is minimal.
\end{corr}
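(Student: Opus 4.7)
The plan is to deduce Corollary \ref{corr1} directly from estimate \eqref{1th2} of Theorem \ref{thmInt1} by inserting the crude lower bound $\sum_{i=1}^{n}\sqrt{\lambda_i^{L_r}}\ge n\sqrt{\lambda_1^{L_r}}$. Specifically, since the eigenvalues satisfy $\lambda_1^{L_r}\le\lambda_2^{L_r}\le\cdots\le\lambda_n^{L_r}$, each summand on the left of \eqref{1th2} is at least $\sqrt{\lambda_1^{L_r}}$. Combining this with \eqref{1th2} yields
\begin{equation*}
n\sqrt{\lambda_1^{L_r}}\le \frac{n}{{\rm vol}(M)}\sqrt{(n-r)\int_M S_r\,dv\int_M (|\mathbf{H}|^2+c)\,dv},
\end{equation*}
and dividing by $n$ and squaring gives \eqref{1corr1}. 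No new computation is required for the inequality itself.

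For the equality discussion, I would argue that equality in \eqref{1corr1} forces equality in \emph{both} the telescoping step and in \eqref{1th2}. The first forces $\lambda_1^{L_r}=\cdots=\lambda_n^{L_r}$ and the second, by the equality discussion already carried out in Theorem \ref{thmInt1}, characterizes $M$ as a round sphere in $\mathbb{R}^{n+1}$ when $c=0$ and as a minimal immersion when $c=1$. Conversely, I would show that on a round sphere the tensor $T^r$ is a constant multiple of the identity, so $L_r$ is a positive multiple of $\Delta$, whose first eigenspace is spanned by the $n+1$ ambient coordinates, guaranteeing $\lambda_1^{L_r}=\cdots=\lambda_n^{L_r}$; and for a minimal $x:M\to\mathbb{S}^N$, I would use the structure equation \eqref{1Section11}, which on a minimal submanifold of the sphere gives $L_r(x)=(r+1)\mathbf{S}_{r+1}-(n-r)S_r\,x$, to produce enough eigenfunctions among the ambient coordinates to force equality in the monotonicity step.

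The step I expect to require the most care is the ``if'' direction of the equality characterization in the case $c=1$: checking that $x$ minimal in $\mathbb{S}^N$ truly forces $\lambda_1^{L_r}=\cdots=\lambda_n^{L_r}$ (and not merely $\lambda_1^{L_r}\le\cdots$) requires extracting $n$ linearly independent first eigenfunctions of $L_r$ from the ambient coordinates via the Takahashi-type identity above. The inequality itself is an immediate three-line consequence of Theorem \ref{thmInt1}.
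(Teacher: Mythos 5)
Your proposal is correct and follows exactly the paper's route: the inequality is obtained from \eqref{1th2} via the crude bound $\sum_{i=1}^{n}\sqrt{\lambda_i^{L_r}}\ge n\sqrt{\lambda_1^{L_r}}$, followed by dividing by $n$ and squaring, and the equality cases are inherited from the equality discussion of \eqref{1th2} in Theorem \ref{thmInt1}. Your extra attention to the ``if'' direction of the equality characterization goes beyond what the paper records (it simply asserts the characterization), but it does not change the argument.
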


In particular,
$$\sqrt{\lambda_{1}^{L_r}}\leq\sqrt{\lambda_{2}^{L_r}}\leq\cdots\leq\sqrt{\lambda_{n}^{L_r}}
<\sum\limits_{i=1}^n\sqrt{\lambda_{i}^{L_r}}.$$ Hence, we also
obtain

\begin{corr}\label{corr2} Under the assumption of Theorem
\ref{thmInt1}, we have
\begin{equation}\label{1corr2}
\lambda_{n}^{L_r}< \frac{n^2(n-r)}{({\rm vol}(M))^2}\int\limits_M
S_r\,dv\int\limits_M(|\mathbf{H}|^2+c)\,dv.
\end{equation}
\end{corr}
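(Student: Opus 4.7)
The plan is to derive \eqref{1corr2} as an immediate consequence of the main inequality \eqref{1th2} combined with the elementary ordering of the spectrum, so no new geometric input is required beyond what is already granted by Theorem~\ref{thmInt1}. I would first record the inequality
\begin{equation*}
\sqrt{\lambda_{n}^{L_r}} < \sum_{i=1}^{n}\sqrt{\lambda_{i}^{L_r}},
\end{equation*}
which is strict because all eigenvalues with $i\ge 1$ are positive (the operator $L_r$ is elliptic and $M$ is closed and connected, so $\lambda_{0}^{L_r}=0$ is simple and $\lambda_{1}^{L_r}>0$), and therefore the $n-1$ positive terms $\sqrt{\lambda_{1}^{L_r}},\dots,\sqrt{\lambda_{n-1}^{L_r}}$ contribute a strictly positive amount to the sum beyond $\sqrt{\lambda_{n}^{L_r}}$.

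Next, I would invoke \eqref{1th2} from Theorem~\ref{thmInt1} to bound this sum from above by
\begin{equation*}
\frac{n}{{\rm vol}(M)}\sqrt{(n-r)\int\limits_M S_r\,dv\int\limits_M (|\mathbf{H}|^2+c)\,dv}.
\end{equation*}
Chaining the two inequalities gives
\begin{equation*}
\sqrt{\lambda_{n}^{L_r}} < \frac{n}{{\rm vol}(M)}\sqrt{(n-r)\int\limits_M S_r\,dv\int\limits_M (|\mathbf{H}|^2+c)\,dv},
\end{equation*}
and squaring both sides (which preserves the strict inequality, since both sides are nonnegative) yields \eqref{1corr2}.

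There is no real obstacle here; the only point requiring even minor care is justifying the strictness of the first inequality, which comes down to noting that $L_r$ is elliptic, $M$ is closed and connected, and hence the eigenspace associated to $0$ is spanned by constants, so $\lambda_{1}^{L_r}>0$ and the sum $\sum_{i=1}^{n}\sqrt{\lambda_{i}^{L_r}}$ strictly exceeds any single term. Everything else is a direct substitution from Theorem~\ref{thmInt1}.
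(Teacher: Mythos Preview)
Your proposal is correct and follows exactly the same route as the paper: bound $\sqrt{\lambda_n^{L_r}}$ strictly by the sum $\sum_{i=1}^{n}\sqrt{\lambda_i^{L_r}}$ using the ordering of eigenvalues, then apply \eqref{1th2} and square. You even supply a bit more justification for the strictness than the paper does.
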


\begin{rem}
When $N=n+1$ and $c=0$, our estimate \eqref{1th1} becomes the result
\eqref{1Section14} of  Alencar, do Carmo and Rosenberg in
\cite{Alencar1993}. For $N=n+1$ and $c\geq0$, our estimate
\eqref{1th1} seems like the estimate \eqref{1Section15} of Grosjean
in \cite{Grosjean2000}. But our estimate \eqref{1th1} is independent
of the convex isometric immersion.

\end{rem}

\begin{rem}
Clearly, our estimate \eqref{1corr1} is new. Moreover, we obtain
estimates on high order eigenvalues of the elliptic operator $L_r$
on submanifolds of space forms with arbitrary codimension.

\end{rem}

\section{Proof of results}

In order to complete our proof, we need the following lemma:

\noindent{\bf Lemma 2.1.} {\it Under the assumption of Theorem
\ref{thmInt1},
 for any function $h_A\in C^2(M)$ satisfying
\begin{equation}\label{Lemma21}
\int\limits_M h_A u_0u_B=0, \ \ \ {\rm for}\ B=1,\cdots,A-1,
\end{equation} we have
\begin{equation}\label{Lemma23} \lambda_{A}^{L_r}\int\limits_M
\langle T^r\nabla h_A,\nabla h_A\rangle\,dv\leq \int\limits_M |{\rm
div}(T^r\nabla h_A)|^2\,dv;
\end{equation}
and
\begin{equation}\label{Lemma22}
\sqrt{\lambda_{A}^{L_r}}\int\limits_M|\nabla
h_A|^2\,dv\leq\delta\int\limits_M \langle T^r\nabla h_A,\nabla
h_A\rangle\,dv+\frac{1}{4\delta}\int\limits_M(\Delta h_A)^2\,dv,
\end{equation}
where $\delta$ is any positive constant.}

\proof We let $\varphi_A=h_A u_0-u_0\int\limits_M h_A u_0^2\,dv$.
Then
\begin{equation}\label{lemma2proof1} \int_M  \varphi_A u_0\,dv=0.
\end{equation} It has been shown  from \eqref{Lemma21} that
\begin{equation}\label{lemma2proof2}
\int\limits_M \varphi_A u_B\,dv=0, \ \ \ {\rm for}\ B=1,\cdots,A-1.
\end{equation} Hence, we have from the Rayleigh-Ritz inequality
\begin{equation}\label{lemma2proof3}
\lambda_{A}^{L_r}\int\limits_M\varphi_A
^2\,dv\leq-\int\limits_M\varphi_A L_r(\varphi_A)\,dv.
\end{equation}
Since $u_0$ is a nonzero constant satisfying $u_0^2\,{\rm
vol}(M)=1$, and $T^{r}$ is symmetric and divergence-free, a direct calculation
yields
\begin{equation}\label{lemma2proof4}\aligned
-\int\limits_M\varphi_A L_r(\varphi_A)\,dv
=&-\int\limits_M\varphi_A\,
{\rm div}(T^r\nabla (h_A u_0))\,dv\\
=&\int\limits_M\langle T^r\nabla (h_A u_0),\nabla (h_A u_0)\rangle\,dv\\
=&u_0^2\int\limits_M\langle T^r\nabla h_A ,\nabla h_A\rangle\,dv.
\endaligned
\end{equation}
Putting \eqref{lemma2proof4} into the inequality
\eqref{lemma2proof3} gives
\begin{equation}\label{lemma2proof5}
\lambda_{A}^{L_r}\int\limits_M \varphi_A^2\,dv\leq
u_0^2\int\limits_M\langle T^r\nabla h_A ,\nabla h_A\rangle\,dv.
\end{equation}

We define \begin{equation}\label{addlemma2proof1}
\omega_A:=-\int\limits_M\varphi_A\, {\rm div}(T^r\nabla (h_A
u_0))\,dv=u_0^2\int\limits_M\langle T^r\nabla h_A ,\nabla
h_A\rangle\,dv.
\end{equation} Then \eqref{lemma2proof5} gives
\begin{equation}\label{addlemma2proof2}
\lambda_{A}^{L_r}\int\limits_M \varphi_A^2\,dv\leq \omega_A.
\end{equation}
From the Schwarz inequality and \eqref{addlemma2proof2}, we obtain
\begin{equation}\label{addlemma2proof3}\aligned
\lambda_{A}^{L_r}\omega_A^2=&\lambda_{A}^{L_r}\left(\int\limits_M\varphi_A\,
{\rm div}(T^r\nabla (h_A u_0))\,dv\right)^2\\
\leq&\lambda_{A}^{L_r}\left(\int\limits_M
\varphi_A^2\,dv\right)\left(\int\limits_M |{\rm div}(T^r\nabla (h_A
u_0))|^2\,dv\right)\\
\leq&\omega_A \int\limits_M |{\rm div}(T^r\nabla (h_A u_0))|^2\,dv,
\endaligned\end{equation} which gives
\begin{equation}\label{addlemma2proof4}
\lambda_{A}^{L_r}\omega_A\leq\int\limits_M |{\rm div}(T^r\nabla (h_A
u_0))|^2\,dv.
\end{equation} Combining \eqref{addlemma2proof1} with
\eqref{addlemma2proof4} yields the inequality \eqref{Lemma23}.

On the other hand, from the the Stokes formula, one gets
$$\aligned -u_0\int\limits_M\varphi_A\Delta h_A \,dv=&
-\int\limits_M \varphi_A\Delta(h_A u_0)\,dv\\
 =&-\int\limits_M
\Big(h_A u_0-u_0\int\limits_Mh_A
u_0^2\,dv\Big)\Delta(h_A u_0)\,dv \\
=&\int\limits_M|\nabla (h_A u_0)|^2\,dv\\
=&u_0^2\int\limits_M|\nabla h_A|^2\,dv.
\endaligned$$
Therefore, for any positive constant $\delta$, we derive from
\eqref{lemma2proof5}
\begin{equation}\label{lemma2proof6} \aligned
\sqrt{\lambda_{A}^{L_r}}u_0^2\int\limits_M|\nabla
h_A|^2\,dv=&-\sqrt{\lambda_{A}^{L_r}}u_0\int\limits_M
\varphi_A\Delta h_A \,dv\\
\leq&\delta\lambda_{A}^{L_r}\int\limits_M
\varphi_A^2\,dv+\frac{1}{4\delta}u_0^2\int\limits_M(\Delta
h_A)^2\,dv\\
\leq&\delta u_0^2\int\limits_M\langle T^r\nabla h_A,\nabla
h_A\rangle\,dv+\frac{1}{4\delta}u_0^2\int\limits_M(\Delta
h_A)^2\,dv.
\endaligned
\end{equation} The desired inequality \eqref{Lemma22} is
obtained.\endproof

\begin{proof}[Proof of the estimate \eqref{1th1} in Theorem \ref{thmInt1}]

For $c=0$, according to the orthogonalization of Gram and Schmidt,
we get that there exists an orthogonal matrix $O=(O_A^B)$ such that
\begin{equation}\label{thproof1}
\sum_{C=1}^N\int\limits_M O_A^C x_Cu_0u_B
=\sum_{\gamma=1}^NO_A^C\int\limits_M x_C u_0u_B=0, \ \ \ {\rm for}\
B=1,\ldots,A-1.
\end{equation} Taking $h_A=\sum_{C=1}^NO_A^C
x_C$ in \eqref{Lemma23}, and summing over $A$ from 1 to $N$, we
obtain
\begin{equation}\label{thproof2} \sum_{A=1}^N\lambda_{A}^{L_r}\int\limits_M \langle
T^r\nabla h_A,\nabla h_A\rangle\,dv\leq \sum_{A=1}^N\int\limits_M
|{\rm div}(T^r\nabla h_A)|^2\,dv.
\end{equation}
Since $L_r$ is elliptic, namely $T^{r}$ is positive definite,
we have
\begin{equation}\label{thproof3}\sum_{A=1}^N\lambda_{A}^{L_r}\int\limits_M
\langle T^r\nabla h_A,\nabla
h_A\rangle\,dv\geq\lambda_{1}^{L_r}\sum_{A=1}^N\int\limits_M \langle
T^r\nabla h_A,\nabla h_A\rangle\,dv.
\end{equation} Therefore, from \eqref{thproof2} and the orthogonal matrix $O$, we derive

\begin{equation}\label{thproof4} \lambda_{1}^{L_r}\sum_{A=1}^N\int\limits_M \langle
T^r\nabla x_A,\nabla x_A\rangle\,dv\leq \sum_{A=1}^N\int\limits_M
|{\rm div}(T^r\nabla x_A)|^2\,dv.
\end{equation}
Let $E_1,\cdots,E_N$ be a canonical orthonormal basis of
$\mathbb{R}^N$, then $x_A=\langle E_A,x\rangle$ and
\begin{equation}\label{addthproof4}\nabla (x_A)=\langle
E_A,e_i\rangle e_i=E_A^{\top},\end{equation} where $\top$ denote the
tangent projection to $M$. Therefore,
$$\aligned\sum_{A=1}^N\langle
T^r\nabla x_A,\nabla x_A\rangle=&\sum_{A=1}^NT^r_{ij}\langle
E_A,e_i\rangle\langle E_A,e_j\rangle\\
=&T^r_{ij}\langle e_i,e_j\rangle\\
=&{\rm trace}(T^r)\\
=&(n-r)S_r,\endaligned$$ which shows that $S_r>0$ since $T^{r}$ is positive definite. Using the definition of $L_r$, we have
$$\sum_{A=1}^N|{\rm div}(T^r\nabla x_A)|^2=\sum_{A=1}^N\langle E_A,L_r(x)\rangle^2=|L_r(x)|^2=(r+1)^2|\mathbf{S}_{r+1}|^2.$$
Thus, we derive from \eqref{thproof4}
\begin{equation}\label{thproof5}
(n-r)\lambda_{1}^{L_r}\int\limits_M
S_r\,dv\leq(r+1)^2\int\limits_M|\mathbf{S}_{r+1}|^2\,dv.
\end{equation}
By virtue of the relationships between $S_r$ and $H_r$, we deduce to
\begin{equation}\label{thproof6}
\lambda_{1}^{L_r}\int\limits_M H_r\,dv\leq c(r)\int\limits_M
|\mathbf{H}_{r+1}|^2\,dv.
\end{equation}

When $c=1$,
$$\mathbb{S}^N=\{x\in \mathbb{R}^{N+1};\ |x|^2=x_0^2+x_1^2+\cdots x_N^2=\frac{1}{c} \}.$$ Using the
similar method, we can derive
\begin{equation}\label{thproof7} \lambda_{1}^{L_r}\sum_{A=0}^{N
}\int\limits_M\langle T^r\nabla x_A,\nabla x_A\rangle\,dv\leq
\sum_{A=0}^{N}\int\limits_M|{\rm div}(T^r\nabla x_A)|^2\,dv.
\end{equation}
Putting $$\sum_{A=0}^{N}\langle T^r\nabla x_A,\nabla x_A\rangle={\rm
trace}(T^r)=(n-r)S_r$$ and
$$\aligned
\sum_{A=0}^{N}|{\rm div}(T^r\nabla
x_A)|^2=&|L_r(x)|^2\\
=&(r+1)^2|\mathbf{S}_{r+1}|^2+c^2(n-r)^2S_r^2|x|^2\\
=&(r+1)^2|\mathbf{S}_{r+1}|^2+c(n-r)^2S_r^2
\endaligned$$ into \eqref{thproof7} gives
\begin{equation}\label{thproof8} (n-r)\lambda_{1}^{L_r}\int\limits_M S_r\,dv\leq
\int\limits_M [(r+1)^2|\mathbf{S}_{r+1}|^2+c(n-r)^2S_r^2]\,dv.
\end{equation} Hence, the desired estimate \eqref{1th1} is derived.

Next, we consider the case that equalities occur. If $c\geq0$ and
the equality in \eqref{1th1} holds, then inequalities
\eqref{lemma2proof3}, \eqref{addlemma2proof3} and \eqref{thproof3}
become equalities. Hence, we have
\begin{equation}\label{thproof9}
\lambda_{1}^{L_r}=\lambda_{2}^{L_r}=\cdots=\lambda_{N}^{L_r}=\mu;
\end{equation}
\begin{equation}\label{thproof10}
L_r(\varphi_A)=-\mu\,\varphi_A,
\end{equation} where $\mu$ is a constant. When $c=0$, from \eqref{thproof10} and
\eqref{1Section11}, we can infer that the vector field
$\varphi=(\varphi_1,\cdots,\varphi_N)$ is parallel with
$\mathbf{S}_{r+1}$. Thus, we obtain
\begin{equation}\label{thproof11}
\frac{1}{2}(|\varphi|^2)_{,i}=\langle e_i, \varphi\rangle=0,
\end{equation} which shows that $|\varphi|^2$ is constant. Hence
$M$ is a sphere in $\mathbb{R}^{n+1}$.
When $c=1$ and the equality in \eqref{1th1} holds, it is easy to see
that $\mathbf{S}_{r+1}=0$ by combining \eqref{thproof10} with
\eqref{1Section11}. That is to say that $x$ is $r$-minimal.

\end{proof}

\begin{proof}[Proof of the estimate \eqref{1th2} in Theorem \ref{thmInt1}]
For $c=0$, we taking $h_A=\sum_{C=1}^NO_A^C x_C$ in
\eqref{Lemma22}, where the matric $O$ is given by \eqref{thproof1}.
 From \eqref{addthproof4}, we get
\begin{equation}\label{2thproof1}
|\nabla h_A|^2=|\nabla x_A|^2=|E_A^{\top}|^2\leq|E_A|^2=1, \ \ \ \
\forall \ A,
\end{equation} and
\begin{equation}\label{2thproof2}
\sum_{A=1}^N|\nabla h_A|^2=\sum_{A=1}^N|\nabla x_A|^2=n.
\end{equation}
Thus, we infer
\begin{equation}\label{2thproof3}
\aligned &\sum_{A=1}^N\sqrt{\lambda_{A}^{L_r}}|\nabla
   h_{A}|^2\\
   &\geq\sum_{i=1}^n\sqrt{\lambda_{i}^{L_r}}|\nabla
   h_{i}|^2+\sqrt{\lambda_{n+1}^{L_r}}\sum\limits_{\alpha=n+1}^N|\nabla h_{\alpha}|^2\\
   &=\sum_{i=1}^n\sqrt{\lambda_{i}^{L_r}}|\nabla
   h_{i}|^2+\sqrt{\lambda_{n+1}^{L_r}}\left(n-\sum\limits_{j=1}^n|\nabla
   h_{j}|^2\right)\\
   &=\sum_{i=1}^n\sqrt{\lambda_{i}^{L_r}}|\nabla
   h_{i}|^2+\sqrt{\lambda_{n+1}^{L_r}}\sum\limits_{j=1}^n(1-|\nabla
   h_{j}|^2)\\
   &\geq\sum_{i=1}^n\sqrt{\lambda_{i}^{L_r}}|\nabla
   h_{i}|^2+\sum\limits_{j=1}^n\sqrt{\lambda_{j}^{L_r}}(1-|\nabla
   h_{j}|^2)\\
   &=\sum\limits_{i=1}^n\sqrt{\lambda_{i}^{L_r}}.
\endaligned
\end{equation}
From \eqref{1Section11}, we have $\sum_{A=1}^N(\Delta
h_{A})^2=|\Delta(x)|^2=|\mathbf{S}_1|^{2}=n^{2}|\mathbf{H}|^{2}$. Hence, taking sum
on $A$ from 1 to $N$ for \eqref{Lemma22}, we have
\begin{equation}\label{2thproof4}
\sum\limits_{i=1}^n\sqrt{\lambda_{i}^{L_r}}{\rm
vol}(M)\leq\delta(n-r)\int\limits_M S_r\,dv
+\frac{n^2}{4\delta}\int\limits_M |\mathbf{H}|^2\,dv.
\end{equation}
Minimizing the right hand side of \eqref{2thproof4} by taking
$$\delta=\frac{n}{2}\sqrt{\frac{\int_M
|\mathbf{H}|^2\,dv}{(n-r)\int_M S_r\,dv}}$$ yields
\begin{equation}\label{2thproof5}
\sum\limits_{i=1}^n\sqrt{\lambda_{i}^{L_r}}{\rm vol}(M)\leq
n\sqrt{(n-r)\int\limits_M S_r\,dv\int\limits_M |\mathbf{H}|^2\,dv}.
\end{equation}

When $c=1$, the proof is similar. We omit it here.

When $c\geq0$ and the equality in \eqref{1th2} holds, then
inequalities \eqref{lemma2proof3}, \eqref{lemma2proof6} and
\eqref{2thproof3} become equalities. Hence, we have
\begin{equation}\label{2thproof9}
\lambda_{1}^{L_r}=\lambda_{2}^{L_r}=\cdots=\lambda_{N}^{L_r}=\mu;
\end{equation}
\begin{equation}\label{2thproof10}
\Delta(\varphi_A)=-\mu\varphi_A.
\end{equation}
Similarly, we infer that, for $c=0$, the equality in \eqref{1th2}
holds if and only if $M$ is a sphere in
$\mathbb{R}^{n+1}$; for $c=1$, the equality in \eqref{1th2} holds if
and only if $x$ is minimal.

\end{proof}

\noindent{\bf Acknowledgements.} The first author's research was
supported by NSFC No. 11371018, 11171091. The second author's
research was supported by NSFC No. 11401537.


\bibliographystyle{Plain}

\end{document}